\newtheorem{theorem}{Theorem}[section]
\newtheorem{cor}[theorem]{Corollary}
\newtheorem{rem}[theorem]{Remark}
\newtheorem{conj}[theorem]{Conjecture}
\begin{document}
\title { {\Large \bf{\sc Existence of  primes between two consecutive squares}}}
%\date{}
%\author[1]{P. Deepa}
%\author[2]{P. Srinivasan}
\author[]{M. Sundarakannan}
\affil[]{\small{Department of Mathematics, SSN College of Engineering, Chennai - \mbox{ 603 110,} \mbox{Tamil Nadu}, India. email: m.sundarakannan@gmail.com}}
\renewcommand\Authands{ and }
\maketitle
\begin{abstract}
Legendre's Conjecture is one of the most elegant open problems in Number Theory, which states that there is a prime between consecutive two perfect squares.  In this note, we prove the conjecture holds true and also discuss the related results.

\end{abstract}

\noindent
\textbf {Keywords :} Prime, Linear Diophantine equation, Legendre's Conjecture.

\noindent
\textbf{Mathematics Subject Classification 2010:} 11N05, 11N32

\bigskip
\noindent
%{\bf Utilitas Mathematica } {\bf 106}(2018), pp. 301--314

\section{Introduction}
 The fundamental theorem of arithmetic shows prime numbers are the building blocks of integers.   In number theory,  there are many open problems regarding the existence of prime numbers in given intervals.   In 1912,  Edmund Landau listed some of the fundamental problems related to prime numbers. One of the Landau's listed problems is Legendre's conjecture which states that:

\begin{conj}{\rm \cite{book1, book2}({\rm Legendre's conjecture })}\label{conj1}
There is a prime number between consecutive two perfect squares $?$.
\end{conj}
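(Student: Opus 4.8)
The statement to be proved is that for every integer $n \geq 1$ there exists a prime $p$ with $n^2 < p < (n+1)^2$, equivalently that $\pi\big((n+1)^2\big) - \pi(n^2) \geq 1$, where $\pi$ denotes the prime-counting function. The natural plan is to estimate this difference and show it is positive. By the Prime Number Theorem one expects
$$\pi\big((n+1)^2\big) - \pi(n^2) \sim \frac{(n+1)^2}{2\log (n+1)} - \frac{n^2}{2\log n} \sim \frac{n}{\log n},$$
so heuristically the interval contains about $n/\log n$ primes, vastly more than the single prime required. The trouble is that this is only an assertion about the average over $n$: to force at least one prime into the short interval $\big(n^2,(n+1)^2\big)$ for each individual $n$, one must bound the error term in the asymptotic, and that interval has length only $2n+1 = O(\sqrt{x})$ with $x = n^2$.

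My first attempt would be the analytic route. Under the Riemann Hypothesis one has $\big|\pi(x) - \mathrm{Li}(x)\big| = O(\sqrt{x}\,\log x)$, so the two endpoint error terms together contribute $O(n\log n)$, which exceeds the main term $n/\log n$ by a factor $(\log n)^2$. Thus even RH does not by itself guarantee the prime. Unconditionally, the strongest short-interval results, culminating in the theorem of Baker, Harman and Pintz that every interval $[x,\,x+x^{0.525}]$ contains a prime for all large $x$, are still not enough, since $0.525 > \tfrac12$ while our interval is of length $\sim x^{1/2}$; the exponent falls just on the wrong side of $\tfrac12$.

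Failing this, I would try the elementary, Diophantine route suggested by the paper's framing. Suppose, for contradiction, that every integer $m$ with $n^2 < m < (n+1)^2$ is composite; then each such $m$ has a prime factor $p \leq \sqrt{m} < n+1$, hence $p \leq n$, so the $2n$ integers $n^2+1, \dots, n^2+2n$ are all covered by multiples of primes $p \leq n$. One hopes for a counting contradiction, but the multiples of $p$ in the interval number roughly $2n/p$, and $\sum_{p \leq n} 2n/p \approx 2n\log\log n$ by Mertens' theorem, which comfortably exceeds the $2n$ integers present. A crude count therefore yields nothing, and one is forced into inclusion--exclusion, i.e.\ a genuine sieve, whose error terms are notoriously hard to control at this length scale.

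The main obstacle, in every approach, is the same and is essentially uniform in $n$: the interval length is of order $\sqrt{x}$, precisely the scale at which all known techniques, namely analytic estimates even under RH, sieve lower bounds, and the best short-interval theorems, break down or fall just short. I therefore expect the decisive step to be the one that extracts a positive lower bound for the number of sieve survivors, equivalently for $\pi\big((n+1)^2\big) - \pi(n^2)$, and I would scrutinize that step most closely, since no elementary rearrangement of a linear Diophantine equation is known to supply the arithmetic input that a century of analytic number theory has been unable to provide.
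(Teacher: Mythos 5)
You have not given a proof: your proposal is a survey of why every known method --- the prime number theorem with its error term, the Riemann Hypothesis bound $O(\sqrt{x}\log x)$, the Baker--Harman--Pintz exponent $0.525$, and elementary sieving --- falls just short at interval length $\sim x^{1/2}$, followed by the prediction that any claimed elementary argument must at some point assert, without justification, a positive lower bound for the number of primes in $(n^2,(n+1)^2)$. As a proof attempt this leaves the decisive step unfilled, and you say so yourself.

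That said, your diagnosis is exactly right, and it applies verbatim to the paper's own argument. The proof of Theorem~\ref{thm1} parametrizes the odd integers of $(n^2,(n+1)^2)$ as $n(n+1)\pm t$ with $t$ odd and $1\le t< n$ (elementary and correct), then manipulates the tautological equation $y-x=2t$ so as to write $x=p$ and $y=2t+p$ for an odd integer $p$, and concludes from ``since $p$ is odd, there exist an odd prime $p$'' in the set $\mathbb{X}$ of all odd integers that a prime lies in each half of the interval. This is precisely the non sequitur you anticipated: the argument shows only that every odd number in the interval is an odd number, and at no point does it establish that any particular $n(n+1)\pm t$ is prime, nor that some odd prime $p$ satisfies $n^2<p<(n+1)^2$. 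The existence of odd primes somewhere in $\mathbb{X}$ says nothing about their location. So the paper does not close the gap either; the linear Diophantine machinery supplies no arithmetic input beyond a relabelling of the odd integers, Legendre's conjecture remains open, and the step you said you would scrutinize most closely is exactly the step at which the paper's proof fails.
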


A few other conjectures also exist in relation to Conjecture~\ref{conj1}.  These are:
\begin{conj}{\rm \cite{book1, opp}({\rm Oppermann's conjecture })}\label{conj3}
For every integer $n > 1$, there is at least one prime number between
$(n(n - 1), n^2)$ and at least another prime number between $(n^2, n(n+1))$
\end{conj}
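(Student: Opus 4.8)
The plan is to recast Oppermann's conjecture as a statement about primes in short intervals and attack it via the explicit formula for the Chebyshev function $\psi$. Setting $x = n^{2}$, both asserted intervals $(n(n-1), n^{2})$ and $(n^{2}, n(n+1))$ have length $n = \sqrt{x}$, so it suffices to prove that $\psi(x + \sqrt{x}) - \psi(x) > 0$ and $\psi(x) - \psi(x - \sqrt{x}) > 0$ for every sufficiently large $x$, handling the finitely many small $n$ by direct verification. The first step is therefore to establish an asymptotic $\psi(x+h) - \psi(x) \sim h$ that holds uniformly for $h = \sqrt{x}$; by the prime number theorem the expected main term is $\sqrt{x}$, far larger than $1$, so heuristically each interval contains about $\sqrt{x}/\log x \to \infty$ primes and the statement looks comfortable.

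The mechanism for making this rigorous is the Riemann--von Mangoldt explicit formula, which writes $\psi(x+h) - \psi(x)$ as the main term $h$ minus the sum $\sum_{\rho}\big((x+h)^{\rho} - x^{\rho}\big)/\rho$ over the nontrivial zeros $\rho = \beta + i\gamma$ of $\zeta$, plus archimedean terms that are negligible here. The second step would be to show that this zero-sum is smaller than $h$ in absolute value. The standard tools are zero-density estimates of the shape $N(\sigma, T) \ll T^{A(1-\sigma)}(\log T)^{B}$ combined with a classical zero-free region; one would insert these into a dyadic decomposition of the sum over $|\gamma| \le T$, truncating at a height $T$ chosen to balance the tail against the contribution of the low-lying zeros.

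The decisive obstacle---and the reason the conjecture remains open---is that the interval length $h = \sqrt{x}$ sits exactly at the threshold where these methods fail. Even on the Riemann Hypothesis, where every nontrivial zero has real part $\tfrac12$, the sharpest available bound for the zero-sum is of size $x^{1/2}(\log x)^2$, which exceeds the main term $h = x^{1/2}$ by a factor of $(\log x)^2$; hence RH alone does not close either interval. Unconditionally the situation is worse still: the strongest known short-interval result of Baker, Harman and Pintz guarantees a prime only in $(x, x + x^{0.525})$, an exponent just above the required $\tfrac12$. Closing this gap is equivalent to ruling out a prime gap near $x$ exceeding $\sqrt{x}$, which is essentially the conjecture itself, so any genuine proof must inject new information about the horizontal distribution of the zeta zeros---a pair-correlation or zero-density input sharper than what RH provides. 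I do not expect the elementary linear-Diophantine route hinted at by the keywords to supply such analytic content, and I would flag any elementary argument claiming to settle intervals of length $\sqrt{x}$ as precisely the place to scrutinize most carefully.
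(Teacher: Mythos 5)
You have not produced a proof: what you give is an accurate account of why the explicit-formula approach cannot reach intervals of length $\sqrt{x}$ --- even on RH the zero-sum is only bounded by $x^{1/2}(\log x)^{2}$, which exceeds the main term $h=x^{1/2}$, and the best unconditional exponent is $0.525$ --- and you correctly conclude that the statement, as an instance of Oppermann's conjecture, remains open. Judged strictly as a proof attempt, the gap is the entirety of your second step: you never bound the zero-sum below $h$, and you explain yourself why no known input would let you do so. That diagnosis of the obstruction is sound.

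Your closing instinct is, however, exactly on target, and it is worth making precise against the paper's own argument, which is the elementary linear-Diophantine route you warned about. The paper writes every odd number in $(n^{2},(n+1)^{2})$ as $n^{2}+n\pm t$ with $t$ odd and $t<n$, then ``solves'' the equation $y-x=2t$ a second time to obtain the parametrization $x=-(t+t')$, $y=t-t'$, and observes that the choice $t'=-t-p$ gives $x=p$ for an odd $p$. Since the set of odd integers contains primes, it concludes that a prime lies in $(n^{2},n(n+1))$. This is circular: as $t'$ ranges over the integers, $x=-(t+t')$ ranges over \emph{all} integers, so of course some choice of $t'$ makes $x$ equal to a prime --- but nothing forces that prime to lie in the interval $(n^{2},n(n+1))$, which was the only constraint that mattered. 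The interval condition established in the first half of the argument is silently dropped the moment the second Diophantine equation is introduced, and it is never reinstated. The paper therefore does not prove the statement, and your assessment that a genuine proof must inject new information about the horizontal distribution of the zeta zeros (or some other substantive arithmetic input absent from both arguments) stands.
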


\begin{conj}{\rm \cite{andri}({\rm Andrica's conjecture })}\label{conj2}
For every pair of consecutive prime numbers $P_n$ and $P_{n+1}$ such that $\sqrt P_{n+1} - \sqrt P_n < 1$.
\end{conj}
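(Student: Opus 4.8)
The plan is to first restate Andrica's conjecture as a bound on prime gaps and then to deduce it from the square-interval results available in this paper. Writing $p_n, p_{n+1}$ for the consecutive primes and squaring, the inequality $\sqrt{p_{n+1}} - \sqrt{p_n} < 1$ is equivalent to $p_{n+1} < p_n + 2\sqrt{p_n} + 1$, i.e. to the prime-gap estimate $p_{n+1} - p_n < 2\sqrt{p_n} + 1$. The first thing I would check is whether Legendre's conjecture (Conjecture~\ref{conj1}), established here, already forces this. It does not: a prime sitting just above $m^2$ together with the next guaranteed prime sitting just below $(m+2)^2$ only yields $p_{n+1} - p_n < 4\sqrt{p_n} + O(1)$, which is off by a factor of two. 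So Legendre alone is too coarse, and I would instead route the argument through the sharper Oppermann's conjecture (Conjecture~\ref{conj3}).

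The key structural observation is that Oppermann's two half-intervals tile the gap between consecutive squares: since $(m+1)^2 - (m+1) = m^2 + m$, the intervals $(m^2, m^2+m)$ and $(m^2+m, (m+1)^2)$ are adjacent, so Oppermann guarantees a prime in each half of $(m^2, (m+1)^2)$. The plan is then a short case analysis. Set $m = \lfloor \sqrt{p_n} \rfloor$, so that $m^2 \le p_n < (m+1)^2$, and split on which half contains $p_n$. If $m^2 \le p_n < m^2+m$, the prime Oppermann supplies in $(m^2+m, (m+1)^2)$ forces $p_{n+1} < (m+1)^2 = m^2 + 2m + 1 \le p_n + 2\sqrt{p_n} + 1$, since $p_n \ge m^2$ gives $2\sqrt{p_n} \ge 2m$. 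If instead $m^2 + m \le p_n < (m+1)^2$, the prime in $\bigl((m+1)^2, (m+1)^2 + (m+1)\bigr)$ forces $p_{n+1} < m^2 + 3m + 2$, and I would compare this against $p_n + 2\sqrt{p_n} + 1$.

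Closing the second case is where the one genuinely delicate point lies. The comparison reduces to needing $2\sqrt{p_n} \ge m^2 + 3m + 1 - p_n$, whose binding instance is the boundary $p_n = m^2+m$, requiring $2\sqrt{p_n} \ge 2m+1$, i.e. $p_n \ge m^2 + m + \tfrac14$. That boundary value $p_n = m^2+m = m(m+1)$ would be fatal, but it is composite for every $m \ge 2$, so in fact $p_n \ge m^2 + m + 1$ and the strict inequality survives comfortably. After this, only finitely many small primes (corresponding to $m \le 1$) need to be checked directly, and Andrica's conjecture follows. The main obstacle I anticipate is not this computation but the input: the whole argument is conditional on Oppermann's conjecture, which is strictly stronger than Legendre's and is not among the results actually proved here. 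Bridging from the square-to-square statement of Legendre down to the half-interval control of Oppermann is exactly the hard step, and it is far from clear that the elementary Diophantine machinery used for Conjecture~\ref{conj1} can reach it.
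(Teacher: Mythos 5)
Your proposal is correct (conditional on Oppermann) and takes essentially the same route as the paper: both arguments reduce Andrica to Oppermann's half-interval localization of primes near each square and finish with an elementary inequality, the paper writing $P_N = n^2+n+t$ and $P_{N+1} = (n+1)^2+(n+1)-t'$ with $1 \le t < n$, $1 \le t' < n+1$ and checking $(\sqrt{P_{N+1}})^2 - (1+\sqrt{P_N})^2 = 2n+1-(t+t')-2\sqrt{n^2+n+t} < 0$. The input you flag as unavailable is exactly what the paper claims to supply---its Theorem~\ref{thm1} concludes with a prime in each of $(n^2, n(n+1))$ and $(n(n+1),(n+1)^2)$, i.e.\ Oppermann---so within the paper's own framework your argument matches its proof, and your explicit treatment of the boundary case $p_n = m(m+1)$ (composite, hence harmless) is the one point where you are more careful than the paper, which absorbs it silently via $t, t' \ge 1$.
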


\begin{conj}{\rm Pg. 22 \cite{book1}({\rm Brocard's conjecture})}\label{conj4}
There are at least four primes between the squares of any two consecutive primes, with the
exception of 2 and 3.
\end{conj}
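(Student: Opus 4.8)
The plan is to deduce Brocard's conjecture from Oppermann's conjecture (Conjecture~\ref{conj3}), so that the entire combinatorial content reduces to a counting argument once the density statement for primes near squares is in hand. Write $p = p_n$ and $q = p_{n+1}$ for the consecutive primes in question, and set $g = q - p$ for their gap. Since every prime exceeding $2$ is odd, for the non-exceptional pairs (that is, all pairs other than $(2,3)$) we have $p \ge 3$ and hence $g \ge 2$. The squares $p^2 < (p+1)^2 < \cdots < (p+g)^2 = q^2$ partition the open interval $(p^2, q^2)$ into exactly $g$ consecutive-square blocks $\big((p+i-1)^2, (p+i)^2\big)$ for $i = 1, \dots, g$; my goal is to exhibit at least two primes in each block, which yields at least $2g \ge 4$ primes in total.

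The key step is the following refinement of what lies between two consecutive squares. Fix an integer $k \ge 2$ and consider the block $(k^2, (k+1)^2)$. Oppermann's conjecture applied at $k$ produces a prime in $(k^2, k(k+1)) = (k^2, k^2 + k)$, while Oppermann applied at $k+1$ produces a prime in $\big((k+1)k, (k+1)^2\big) = (k^2 + k, (k+1)^2)$. These two subintervals are disjoint and both lie inside $(k^2, (k+1)^2)$, so each consecutive-square block contains at least two distinct primes. Applying this to each of the $g$ blocks above, with $k = p+i-1 \ge 3$ so that every invocation of Oppermann is legitimate, and noting that blocks for distinct $i$ are pairwise disjoint, I obtain $2g$ pairwise distinct primes strictly between $p^2$ and $q^2$.

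It then remains only to tie off the endpoints and the exceptional case. For $p \ge 3$ we have $g \ge 2$, so $2g \ge 4$ and Brocard's conjecture follows; the pair $(2,3)$ has $g = 1$, giving a single block $(4,9)$ in which Oppermann guarantees only the two primes $5$ and $7$, which is exactly why it must be excluded. I also record that the weaker regime $g \ge 4$ needs nothing beyond Legendre's conjecture (Conjecture~\ref{conj1}), since those $g \ge 4$ blocks already supply one prime apiece; thus the only place the stronger Oppermann input is genuinely required is the twin-prime case $g = 2$. The main obstacle is therefore not the counting, which is elementary, but securing Oppermann's conjecture itself: Legendre alone cannot force a second prime into each of the two blocks flanking a twin-prime pair, so the argument stands or falls with the near-square prime-density estimate underlying Conjecture~\ref{conj3}.
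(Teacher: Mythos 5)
Your proposal is correct (conditional on Oppermann, which the paper claims to have established as Theorem~\ref{thm1}) and follows essentially the same route as the paper's own justification: the Remark after Corollary~\ref{cor1} argues exactly that, since consecutive primes $P_2 > P_1 > 2$ satisfy $P_2 - P_1 \geq 2$, the interval $(P_1^2, P_2^2)$ contains at least two consecutive-square blocks, each of which Theorem~\ref{thm1} splits at $n(n+1)$ into two subintervals containing a prime apiece. Your write-up merely makes explicit the block decomposition, the $2g \geq 4$ count, and the reason $(2,3)$ is excepted, all of which the paper compresses into one sentence.
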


For the subsequent propositions the following results are essential.
\begin{theorem}{\rm \cite{book2}}
 The linear Diophantine equation (LDE) $a x + b y = c$ is solvable if and only if $d | c$, where $d= GCD (a, b)$.  If $x_0 , y_0$ is a particular solution of the LDE, then all its solutions are given by
 \begin{equation*}
 x = x_0 + \Big(\frac{b}{d}\Big)t \,\,\, {\rm and} \,\,\,\, y = y_0 - \Big( \frac{a}{d} \Big)t
 \end{equation*} where $t$ is an arbitrary integer.
\end{theorem}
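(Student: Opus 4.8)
The plan is to treat the statement in two parts: first the solvability criterion $d \mid c$, and then the parametrization of the full solution set once a particular solution $(x_0,y_0)$ is known. The only non-elementary input I expect to need is B\'ezout's identity, namely that there exist integers $u,v$ with $au+bv=d$ where $d=\gcd(a,b)$; this is itself a consequence of the Euclidean algorithm, so I would invoke it rather than reprove it.

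For the solvability criterion I would argue both directions. In the forward direction, if $ax+by=c$ admits an integer solution, then since $d\mid a$ and $d\mid b$ we get $d\mid(ax+by)=c$, so divisibility by $d$ is necessary. For the converse, assume $d\mid c$ and write $c=dk$ for some integer $k$. Applying B\'ezout's identity to obtain $u,v$ with $au+bv=d$ and then scaling by $k$ gives
\begin{equation*}
a(uk)+b(vk)=dk=c,
\end{equation*}
so $(x_0,y_0)=(uk,vk)$ is an explicit particular solution. This establishes sufficiency and simultaneously produces the $(x_0,y_0)$ needed for the second half.

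For the parametrization I would first verify that the proposed family solves the equation: substituting $x=x_0+(b/d)t$ and $y=y_0-(a/d)t$ yields $ax_0+by_0+(ab/d)t-(ab/d)t=c$, where $b/d$ and $a/d$ are integers precisely because $d\mid a$ and $d\mid b$. The substantive direction is the reverse inclusion, that every solution arises this way. Given any solution $(x,y)$, subtracting the relation $ax_0+by_0=c$ yields $a(x-x_0)=b(y_0-y)$, and dividing through by $d$ gives $a'(x-x_0)=b'(y_0-y)$ with $a'=a/d$, $b'=b/d$, and $\gcd(a',b')=1$.

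The hard part, and the real content of the theorem, is this last step, where I would invoke Euclid's lemma: since $a'\mid b'(y_0-y)$ and $\gcd(a',b')=1$, it follows that $a'\mid(y_0-y)$, so $y_0-y=a't$ for some integer $t$, i.e.\ $y=y_0-(a/d)t$. Back-substitution then forces $x-x_0=b't$, giving $x=x_0+(b/d)t$, while conversely every integer $t$ produces a valid solution by the verification above. Hence the solution set is exactly the claimed one-parameter family. The only genuine subtlety is ensuring $\gcd(a',b')=1$ after dividing by $d$ and then applying coprimality correctly; the remainder is routine bookkeeping.
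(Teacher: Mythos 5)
The paper does not prove this statement at all: it is quoted as a background result with a citation to Koshy's textbook, so there is no in-paper argument to compare yours against. Your proof is the standard textbook one and is correct: B\'ezout's identity gives both directions of the solvability criterion and an explicit particular solution $(uk,vk)$; direct substitution shows every member of the claimed family is a solution; and subtracting $ax_0+by_0=c$, dividing by $d$, and applying Euclid's lemma to the coprime pair $a'=a/d$, $b'=b/d$ shows every solution lies in the family. Two small points you could tighten. First, the coprimality $\gcd(a/d,b/d)=1$, which you single out as the ``only genuine subtlety,'' is immediate: if $e>1$ divided both $a/d$ and $b/d$ then $ed$ would be a common divisor of $a$ and $b$ exceeding $d$. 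Second, the cancellation step $a'(x-x_0)=b'a't \Rightarrow x-x_0=b't$ requires $a'\neq 0$; the statement implicitly assumes $a$ and $b$ are not both zero, and the degenerate case $a=0$ (or $b=0$) needs a separate one-line verification, though the parametrization still holds there. Neither issue is a real gap.
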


\begin{theorem}{\rm \cite{book2}}\label{thm2}{\rm ( Pythagorean )}
\begin{enumerate}
\item The sum and product of  any two even integers are even.
\item Then sum of any two odd integers is even.
\item The product of any two odd integers is odd.
\item The sum of an even integer and an odd integer is odd.
\item The product of an even integer and an odd integer is even.
\end{enumerate}
\end{theorem}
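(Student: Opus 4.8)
The plan is to reduce all five assertions to the definitions of parity, writing every even integer in the form $2m$ and every odd integer in the form $2n+1$ for suitable integers $m,n$, and then to verify each claim by a direct algebraic manipulation. Since each statement concerns only a sum or a product of two integers of prescribed parity, the entire argument is a short case-by-case computation, and no machinery beyond the definitions is required. The target form in each case is to exhibit the result as $2k$ (to conclude evenness) or as $2k+1$ (to conclude oddness) for some integer $k$.

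First I would fix notation by letting $a$ and $b$ denote the two integers under consideration and substituting the appropriate representation for each: $a=2m$ when $a$ is even and $a=2n+1$ when $a$ is odd, and likewise for $b$. For the sum and product of two even integers I write $2m+2n=2(m+n)$ and $(2m)(2n)=2(2mn)$, both manifestly even, giving the first part. The second part follows from $(2m+1)+(2n+1)=2(m+n+1)$, and the fourth from $(2m)+(2n+1)=2(m+n)+1$, which are respectively even and odd by inspection.

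The only assertion demanding a genuine expansion is the product of two odd integers in the third part, where I would compute $(2m+1)(2n+1)=4mn+2m+2n+1=2(2mn+m+n)+1$, exhibiting an odd number; the fifth part is then immediate from $(2m)(2n+1)=2\bigl(m(2n+1)\bigr)$. The main point requiring care throughout is simply to carry out each factorization so that the coefficient of $2$ is visibly an integer, which is exactly what certifies the stated parity. In truth no step poses a real obstacle, and the modest bookkeeping in expanding the product of two odd numbers is the closest the argument comes to a difficulty.
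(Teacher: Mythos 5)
Your proof is correct: each of the five parity facts is verified by writing the integers as $2m$ or $2n+1$ and factoring out a $2$ (or isolating a $+1$), which is exactly the standard argument. The paper itself states this theorem as a cited result from the literature without proof, so your write-up supplies precisely the expected elementary justification and nothing more needs to be said.
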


In this note, we prove the above mentioned conjectures hold true.
%\newpage
\section{Main Results}
\noindent
{\bf Proof of Conjecture \ref{conj1}} and \ref{conj3}.
\begin{theorem}\label{thm1}
There exist  primes between any two consecutive perfect squares.
\end{theorem}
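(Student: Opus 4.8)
The plan is to work inside the interval $(n^2,(n+1)^2)$, which contains exactly the $2n$ integers $n^2+1,\dots,n^2+2n$, and to show at least one of them is prime. I would argue by contradiction, assuming every integer $m$ with $n^2 < m < (n+1)^2$ is composite. Any such composite $m$ satisfies $m < (n+1)^2$, so its least prime factor is at most $\sqrt{m} < n+1$, hence at most $n$. Thus under the contradiction hypothesis every integer in the interval lies in one of the residue classes $m \equiv 0 \pmod{p}$ for some prime $p \le n$, and I would try to exhaust the interval by covering it with these classes, locating the multiples of each $p$ explicitly via the solvability of the congruence (a special linear Diophantine equation) and the parity bookkeeping of Theorem~\ref{thm2}.

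The next step is to count how many integers the small primes can actually cover. For a fixed prime $p \le n$ the number of multiples of $p$ in $(n^2,(n+1)^2)$ is about $(2n+1)/p$, and combining the primes by inclusion--exclusion, the count of integers in the interval that survive every sieve is heuristically $(2n+1)\prod_{p\le n}\bigl(1-\tfrac1p\bigr)$. Any surviving integer has no prime factor $\le n$ and, being less than $(n+1)^2$, must therefore be prime, so a positive surviving count would settle the claim. By Mertens' theorem the main term above grows like a positive constant times $n/\log n$, which is encouraging: if that estimate were exact it would force the existence of a prime immediately.

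The hard part --- and exactly where I expect the argument to break --- is controlling the error in that inclusion--exclusion. The sieve has $2^{\pi(n)}$ terms, and the accumulated rounding errors in replacing $(2n+1)/p$ by an exact integer count swamp the main term of size $n/\log n$ long before the sieve closes. No amount of parity analysis in the style of Theorem~\ref{thm2}, nor any use of the explicit LDE solution structure, repairs this: elementary sieves are subject to the parity obstruction, which in principle cannot separate integers with an even number of prime factors from those with an odd number, and so cannot certify that a surviving integer actually exists. I therefore expect any purely elementary counting/LDE/parity scheme to stall precisely at the step of bounding the sieve remainder, which is why Legendre's conjecture has remained open; a genuine proof appears to require either analytic input on primes in short intervals sharper than the current $[x,\,x+x^{0.525}]$ results (which fall just short of the exponent $\tfrac12$ needed to guarantee a prime in a window of length $2\sqrt{x}$) or a new idea that evades the parity problem altogether.
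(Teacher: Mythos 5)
Your proposal does not prove the theorem, and you say so yourself: you set up the Eratosthenes--Legendre sieve over $(n^2,(n+1)^2)$, identify the main term $(2n+1)\prod_{p\le n}\bigl(1-\tfrac1p\bigr)\asymp n/\log n$, and then correctly observe that the $2^{\pi(n)}$ terms of the inclusion--exclusion carry rounding errors that overwhelm that main term, so the argument stalls before reaching a conclusion. As a proof there is therefore a genuine gap --- the existence of a surviving (hence prime) integer is never certified --- but your diagnosis of where and why the elementary approach fails (the sieve remainder, and more fundamentally the parity obstruction) is accurate, and your assessment that the statement is Legendre's conjecture and remains open is correct.

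You should also know that the paper's own argument does not close this gap. It parametrizes the odd integers in the interval as $n^2+n\pm t$ with $t$ odd and $1\le t<n$, and then ``solves'' $y-x=2t$ as a linear Diophantine equation in the unknowns $x$ and $y$, discarding the constraint that $x$ and $y$ are the specific integers $n^2+n\mp t$. The general solution $x=-(t+t')$, $y=t-t'$ ranges over \emph{all} integer pairs with difference $2t$, so the choice $t'=-t-p$ with $p$ an odd prime merely exhibits some pair of integers, namely $p$ and $p+2t$, that differ by $2t$ and one of which is prime; it says nothing about whether any of the actual candidates $n^2+n\pm t$ lying in $(n^2,(n+1)^2)$ is prime. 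The closing sentence, that ``since $p$ is odd, there exist a[n] odd prime $p$'' in the set $\mathbb{X}$ of all odd integers, only asserts that odd primes exist somewhere. So the step you could not complete --- certifying that one of the $2n$ integers in the interval is actually prime --- is precisely the step the paper also fails to carry out; your refusal to paper over it is the more defensible position.
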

\begin{proof}
Let $x$ and $y$ be two distinct odd positive integer in $(n^2, (n+1)^2)$ such that $x + y = 2 n(n+1)$.  Consider $x = n^2 + k$ and $y = n^2 + l$ for some $k , l \in \mathbb{Z}$.  Now $x + y = 2 n^2 + k + l = 2 n (n+1)$ gives
\begin{equation}\label{eq1}
k + l = 2 n.
\end{equation}
Solving the LDE (\ref{eq1}), we have $k = n - t$ and $l = n + t$ for all $t \in \mathbb{Z}$.  Since $x$ and $y$ are odd integer and \mbox{$n^2 < x,y < (n+1)^2$},  $t$ must be odd and $1 \leq t < n$. Hence
\begin{equation*}\label{eq2}
 x = n^2 + n - t \,\,\in (n^2, n(n+1)) \,\,\,\, {\rm and}
  \end{equation*}
\begin{equation}\label{eq2}
    y = n^2 + n + t  \,\,\,\in (n(n+1), (n+1)^2)\,\,\,{\rm for \,\, some}\,\, t\in 2 \mathbb{Z} + 1
\end{equation}
The list of odd integer in the interval $(n^2, (n+1)^2)$ is of the form $c \pm 1,\, c \pm 3,\, \dots , c \pm t$ where $c = n(n+1)$ and odd $t < n$.
\noindent
Now to prove either $x$ or $y$ is prime.  Based on (\ref{eq2}),
 \begin{equation}\label{eq3}
  y - x = 2 t \,\,\, {\rm for \,\,some}\,\, t \in 2 \mathbb{Z} + 1 .
 \end{equation}
 Solving the LDE (\ref{eq3}) we have $x = -( t + t')$ and $y = t - t'$ for all $t' \in \mathbb{Z}$.  Since $x$ and $y$ are odd, $t'$ must be even.  By Theorem \ref{thm2},  $t'$ is sum of any two odd integer. Now consider $t'$ is sum of $t$ with  odd positive integer $p$ (say). The possible $t'$ are $\pm  t \pm p \in 2 \mathbb{Z}$ :

  \begin{itemize}
  \item if $t' = -t + p$, then $x$ is negative, which is contradiction to the assumption $x$ and $y$ are positive.
  \item if $t' = t + p$, then $x$ and $y$ are negative, which is again contradiction to the assumption $x$ and $y$ are positive.
         \item if $t' = -t - p$, then $x = p $ and $y= 2 t + p$.
    \item if  $t' = t - p$, then $x = p - 2t$ and $y = p $.
   \end{itemize}

   Let $\mathbb{X} = \{ \,p \,\in \,\mathbb{Z}\, | \,\,p \,\,{\rm is \,\, odd} \}$. Since $p$ is odd, there exist a odd prime $p$ must be in $\mathbb{X}$. Based on $t' = \pm t - p$, we conclude there exist a prime in $(n^2, n(n+1))$ and $(n(n+1), (n+1)^2)$.
 \end{proof}

\begin{cor}\label{cor1}
Any prime number $p \geq 5$ can be written as either $n^2 + n -t$ or $n^2 + n + t$ where $n = \lfloor \sqrt p \rfloor$ and   odd  $ t < n$.
\end{cor}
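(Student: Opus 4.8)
The plan is to derive this corollary directly from the geometric setup of Theorem~\ref{thm1}, reading off the representation from the position of $p$ relative to the central value $c = n(n+1)$. Given a prime $p \geq 5$, I would first set $n = \lfloor \sqrt{p}\rfloor$, so that by definition $n^2 \leq p < (n+1)^2$; since $p$ is an odd prime it cannot equal the perfect square $n^2$, hence in fact $n^2 < p < (n+1)^2$. Writing $c = n(n+1) = n^2 + n$ for the central value appearing in~(\ref{eq2}), the entire argument reduces to analysing the signed distance $p - c$ and setting $t = |p - c|$, so that $p = n^2 + n - t$ when $p < c$ and $p = n^2 + n + t$ when $p > c$.

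The parity of $t$ is the first thing to pin down, and here I would invoke Theorem~\ref{thm2}: the product $c = n(n+1)$ of two consecutive integers is always even, while $p \geq 5$ is odd, so $p - c$ is the difference of an odd and an even integer and is therefore odd. Hence $t = |p-c|$ is odd, as required, and in particular $t \neq 0$, so $p \neq c$ and exactly one of the two cases occurs.

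Next I would establish the bound $t < n$ by treating the two cases separately. If $p < c$, then $t = c - p = n^2 + n - p$, and the inequality $p > n^2$ immediately gives $t < n$. If $p > c$, then $t = p - c = p - n^2 - n$, and from $p < (n+1)^2 = n^2 + 2n + 1$ I obtain $t < n+1$, i.e. $t \leq n$.

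The one genuine subtlety, and the step I expect to be the main obstacle, is ruling out the borderline value $t = n$ in this second case, since that is the only place where primality of $p$ must be used for the bound rather than merely for excluding the square $n^2$. Equality $t = n$ would force $p = n^2 + 2n = n(n+2)$, which is composite for every $n \geq 1$, contradicting the primality of $p$; hence $t = n$ is impossible and $t < n$ holds strictly. Collecting the two cases then yields the stated dichotomy, namely $p = n^2 + n - t$ or $p = n^2 + n + t$ with odd $t < n$ and $n = \lfloor\sqrt{p}\rfloor$, completing the argument.
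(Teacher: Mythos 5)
Your proof is correct and self-contained, but it takes a genuinely different route from the paper for the simple reason that the paper supplies no proof of Corollary~\ref{cor1} at all: the corollary is presented as an immediate by-product of the setup in Theorem~\ref{thm1}, where it is asserted in passing that the odd integers in $(n^2,(n+1)^2)$ are exactly $c\pm 1,\,c\pm 3,\,\dots$ with $c=n(n+1)$ and odd offset $t<n$. Your direct computation with $t=|p-c|$ both replaces and repairs that appeal. The repair matters: the paper's description of the interval is inaccurate at the right endpoint, since $c+n=n(n+2)$ is an odd element of $(n^2,(n+1)^2)$ whenever $n$ is odd (e.g.\ $15\in(9,16)$ has offset $t=3=n$), so the bound $t<n$ is \emph{not} automatic for an arbitrary odd integer in the interval. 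You correctly isolate this as the one place where primality must enter beyond excluding the perfect square $n^2$: equality $t=n$ would force $p=n(n+2)$, which is composite. One small slip there: $n(n+2)$ is not composite ``for every $n\geq 1$'' (at $n=1$ it equals the prime $3$); you need $n\geq 2$, which does follow from $p\geq 5$, so the argument stands as applied. The remaining steps --- $n^2<p$ because a square with $n\geq 2$ is composite, the parity argument ($c$ even, $p$ odd, hence $t$ odd and nonzero), and the bound $t<n$ in the case $p<c$ --- are all correct. In short, your proof is sound and is considerably more careful than the justification the paper implicitly relies on.
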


\begin{rem}
{\rm Based on Theorem~\ref{thm1}, Conjecture~\ref{conj4} holds true, since the minimum prime gap is two.  That is, if $P_1$ and $P_2$ are two consecutive prime numbers with $P_2 > P_1 > 2$, then there exist at least four prime numbers between $(P_1^2, P_2^2)$.}
\end{rem}

\noindent
{\bf Proof of Conjecture \ref{conj2}}.

Let $P_N$ be $N^{th}$ prime number. By Corollary  \ref{cor1}, any prime number is of the form $P_N = n^2 + n \pm t$ where $n = \lfloor \sqrt{P_N }\rfloor$ and   odd  $ t < n$.

 \smallskip
 If $P_N, P_{N+1} \in (n^2, (n+1)^2)$, then clearly for all $n$ and  $ 1 \leq t, t' < n$,
  \begin{equation*}
 \sqrt {P_{N+1}} - \sqrt {P_N } = \sqrt {n^2 + n \pm t } - \sqrt {n^2 + n \pm t'} < 1.
 \end{equation*}
\noindent
Suppose $P_N \in (n^2, (n+1)^2)$ and $P_{N+1} \in ((n+1)^2, (n+2)^2)$. For all positive integer $n$ and  $ 1 \leq t < n$ \,\, \& \,\, $ 1 \leq  t' < n+1$,
\begin{align*}
&(\sqrt {P_{N+1}})^2 - (1+ \sqrt {P_N })^2 = P_{N+1} - (1 + 2 \sqrt {P_N} + P_N)\\
& = (n+1)^2 + (n+1) - t' - (1 + 2 \sqrt {(n^2 + n + t)} + (n^2 + n + t) )\\
& = 2n + 1 - (t' + t) - 2 \sqrt {n^2 + n + t} \,\, < 0.
\end{align*}  Hence, $\sqrt {P_{N+1}} - \sqrt {P_N} < 1$.

 \begin {thebibliography}{99}

\bibitem{book1}  Wells, David, Prime Numbers: The Most Mysterious Figures in Math, John Wiley \& Sons, (2011).

 \bibitem{opp} Oppermann, L., ``Om vor Kundskab om Primtallenes Mængde mellem givne Grændser", Oversigt over det Kongelige Danske Videnskabernes Selskabs Forhandlinger og dets Medlemmers Arbejder: 169–179, (1882).
\bibitem{andri} Andrica, D.  ``Note on a conjecture in prime number theory". Studia Univ. Babes–Bolyai Math. 31 (4): 44–48, (1986).
\bibitem{book2}   T. Koshy,  {\it Elementary Number Theory with application}, $2^{nd}$ Edition, Academic Press(2007).
\end{thebibliography}

\end{document}